\newtheorem{theorem}{Theorem}[section]
\newtheorem{lemma}{Lemma}[section]
\newtheorem{remark}{Remark}[section]
\newtheorem{definition}{Definition}[section]
\newcommand{\eg}{\textit{e.g.}\ }
\newcommand{\ie}{\textit{i.e.}\ }
\newcommand{\nb}{\textit{n.b.}\ }
\newcommand{\itemsymbol}{{\small $\blacktriangleright$}}
\newcommand{\bbR}{{\mathbb R}}
\definecolor{BLUE}{rgb}{0.3,0.3,0.9}
\definecolor{RED}{rgb}{0.8,0.05,0.05}
\definecolor{GREEN}{rgb}{0.05,0.5,0.05}
\title{Equitable Continuous Organizations \\ with Self-Assessed Valuations}
\author{%
   \begin{tabular}{cp{0.5in}c}
      \textbf{Howard Heaton} &  & \textbf{Sam Green}\\
      \normalfont 
      Typal Research &  & 
      \normalfont  Semiotic AI \\
      \normalfont  Typal LLC   & & 
   \end{tabular} 
}
\begin{document}

\maketitle 

\begin{abstract}
    Organizations  are often unable to align the interests of  all  stakeholders with the financial success of the organization (\eg due to regulation). 
    However,  continuous organizations (COs) introduce a paradigm shift. COs offer  immediate liquidity, are permission-less  and  can align incentives.  CO shares are issued continuously in the form of tokens via a smart contract on a blockchain.     Token prices are designed to increase as more tokens are minted. When the share supply is low,  near-zero prices make it advantageous to buy  and hold     tokens until interest in the CO   increases, enabling a profitable sale.
    This attribute of COs, known as investment efficiency, is desirable.
    Yet, it can yield allocative inefficiency via the ``holdout problem,''  \ie   latecomers may find a CO more valuable than early tokenholders, but be unable to attain the same token holdings due to inflated prices.   With the aim of increasing overall equity, we introduce a  voting mechanism into COs.
    We show this   balances allocative and investment efficiency, 
    and may dissuade speculative trading behaviors, thereby decreasing investment risk. \end{abstract}

\section{Introduction} 

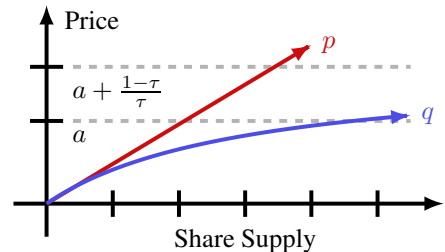
\begin{wrapfigure}[13]{r}{0.47\textwidth}
    \centering
    \vspace*{-12pt}
    \begin{tikzpicture}[scale=0.88] 
        \def\X{5.5}
        \def\A{1.25}
        \def\T{0.55}
        \def\K{0.6} 
        
        
        \draw[line width=1.5]  (-0.25, {\A + (1-\T)  / \T}) --  (0.25, {\A + (1-\T)  / \T});
        \draw[line width = 1.5, dashed, opacity=0.3] (0.4, {\A + (1-\T)  / \T}) -- (\X, {\A + (1-\T)  / \T}); 
        
        \draw[] (0.25, {\A + (1-\T)  / \T}) node[below right] {$a+\frac{1-\tau}{\tau}$} ;
        
        \draw[] (0.25, {\A }) node[below right] {$a $} ;

        \draw[line width=1.5]  (-0.25, {\A})   --  (0.25, {\A }); 
         \draw[line width = 1.5, dashed, opacity=0.3] (0.4, {\A })  -- (\X, {\A  }); 
       
        \draw[line width=1.5, -latex] (-0.5,0) -- ({\X+0.5},0);
        \draw[] ({(\X + 0.5)/2}, -0.25) node[below] {Share Supply };
        
        \draw[line width=1.5, -latex] (0,-0.5) -- (0,3.0) node[right, xshift=0.1cm, yshift=-0.2cm] {Price};
 
        \draw[line width=1.5, RED, -latex]  plot [domain=0:\X-1.5,samples=100] (\x, {\K * \x}) node[right] {${p}$};

        \draw[line width=1.5, BLUE, -latex]  plot [domain=0:\X,samples=100] (\x, { (1-\T + \T*\A)*\K*\x / (1+\T * \K *\x }) node[right] {${q}$};

        \foreach \x in {1,2,...,\X}
        { 
            \draw[line width=1.5] (\x,0.2) -- (\x, -0.2) ; 
        }         
    \end{tikzpicture}
    
    \vspace*{-8pt}
    \caption{\textcolor{RED}{Linear bonding curve $p(s)=ks$} and an   \textcolor{BLUE}{allocative curve $q(s;\ a,\tau,p)$}, with assessment $a$ and tax rate $\tau $. The curve $q$ asymptotically approaches $a + (1-\tau)   /\tau$ whereas $p \rightarrow \infty$.}
    \label{fig: eye-candy} 
\end{wrapfigure}
 
\textit{When you boil it all down, it's the investor's job to intelligently bear risk for profit. Doing it well is what separates the best from the rest.} \\[3pt]
\ \hspace*{5pt} \hfill -- Howard Marks \cite{marks2011most}
 
A new paradigm  for raising capital arose in the realm of decentralized protocols. Yet,  how can protocols (\ie {pre-programmed rules}) raise capital in a reliable and equitable manner? 
Traditional capital raising 
requires a centralized exchange to issue shares and   trading professionals to match buyers with sellers. With the advent of programmable blockchains like Ethereum, new market mechanisms are possible.  In particular, \textit{Automated Market Makers} (AMMs) encode buy-sell logic in smart contracts \cite{angeris2020improved}. Multiple AMM designs have been deployed \cite{zhang2018formal, adams2018uniswap}, beginning with Bancor \cite{wiki2022bancor}.     AMMs enable  building \textit{Continuous Organizations} (COs), where investors   directly buy and sell shares, in the form of tokens, by interacting with smart contracts 
that do \textit{not} rely on matching bid
and ask orders \cite{favre2019continuous}. This maintains liquidity irrespective of   trade volume.
Thus, AMMs enable new organizations to boostrap liquid markets. 
    But,     AMMs are susceptible to speculative behaviors (\eg  sandwich attacks\footnote{Sandwich attacks occur when an attacker executes transactions before and after a victim transaction for a guaranteed gain, at the expense of the victim.} and profit scalping\footnote{Profit scalpers buy early at low prices with the expectation of selling sometime shortly after at higher prices.}),  which may present outsized risks that   deter   long-term investors.
    We partially address these risks via a new AMM.

\begin{wraptable}[9]{r}{0.54\textwidth} 
    \centering 
    \renewcommand{\arraystretch}{1.25}
     
    \begin{tabular}{ccc}
       & \multicolumn{1}{c}{Investor} & Speculator \\\cline{2-3}  
     Payments &  10.7 $\times 10^{6}$ GRT & 4.2 $\times 10^{6}$ GRT \\
     Rewards  &  6.0 $\times 10^6$ GRT & 4.8 $\times 10^6$ GRT \\ 
    \end{tabular}
    \renewcommand{\arraystretch}{1.0}
    \vspace*{-3pt}
    \caption{On-chain statistics of The Graph indicate  speculators   realized more rewards than   payments into The Graph's COs. Speculator profit came at the expense of investors (who realized less rewards than     payments).}
    \label{tab: grt-honest-scalper} 
\end{wraptable} 
 
Events   within The Graph motivate this work. 
The Graph is a web3 protocol that provides APIs for developers to conveniently query blockchain data.   
The Graph deploys many COs\footnote{As of March 17, 2022, there are 274 COs on the decentralized network.}. 
To cause select blockchain data to become queryable through The Graph's APIs,   developers deploy one or more subgraphs. Each subgraph contains instructions 
for how to extract data from a blockchain. 
Curators are entities within The Graph and they influence which subgraphs are indexed. 
Curators influence Indexers by depositing tokens (GRT) and minting shares in   COs that are unique to each subgraph \cite{graph2022curator}.  
The Graph's COs are intended to incentivize curators to use exogenous information to determine the value of particular subgraph deployments. Despite this, the original CO design  deployed by The Graph experienced significant short-term speculative activity (\eg see example behavior in Figure \ref{fig: speculat-game-behavior}). 

\paragraph{Speculating versus Investing.}  
The intention of curation in The Graph protocol is to encourage individuals with long-term investment horizons  to provide meaningful signals for other protocol workers to follow. In this work, we call such individuals \textit{investors}. 
Our definition of investment follows Phelps' description: ``Wise investors do not buy a stock just because it is going up or is expected to go up. Wise investors buy because they foresee an increase in earning or dividends that will make today's price look cheap in years to come'' \cite{phelps2015one}. 
On the other hand, \textit{speculators} are those   seeking short-term gain via price fluctuations (\eg   exhibited by profit scalping and sandwich attacks).

The Graph launched its decentralized network in December 2020. We analyzed all 11,358 curation mint/burn transactions   from that time until September 2021. During those nine months, we measured speculator activity,  defined here  as a curator signalling within two minutes of a subgraph being published. This definition comes from assuming if a curator signals within the first two minutes of publication, then the curator could not have performed  due diligence on the subgraph\footnote{This assumes the curator does not have access to insider information.}.  
We measured so-called investor and speculator curator share purchases and   sells,  summarized by Table \ref{tab: grt-honest-scalper}.
Despite making roughly 28\% of payment volume, speculators   obtained over 44\% of total realized rewards.

\paragraph{Contribution.}  We propose a class of CO-like mechanisms   to incentivize the ``haves''  \textit{and} empower the ``have nots,''  called equitable COs (ECOs). 
Our design appears to deter sandwich attacks and profit scalping, which better rewards the ``haves''  for investing. 
Concretely, we give  three novelties.
\begin{itemize}
    \item[\itemsymbol] Propose ECOs, which augment COs by using  a voting mechanism (via allocative curves).
    
    \item[\itemsymbol] {Show   ECOs can give   scenarios that reward investors and make sandwich attacks    lose money.}    
    
    \item[\itemsymbol] Prove the tax rate parameter for allocative curves balances different types of efficiency.

\end{itemize}

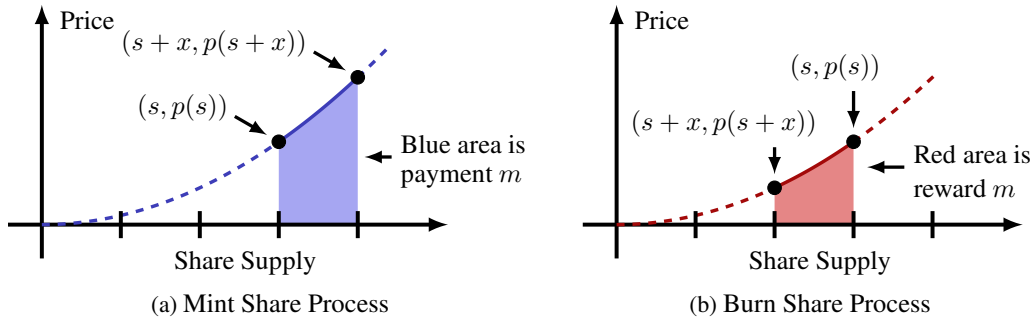
\begin{figure}[t]
    \centering
    \subfloat[{\normalsize Mint Share  Process}]{
    \begin{tikzpicture}[scale=0.9]
        \def\X{3.5}
        \def\Y{2}
        \def\O{1.16666}

        \fill [BLUE, opacity=0.5,  variable=\x, samples=100]
          (\X, 0) 
          -- plot [domain=\X:\X+\O,samples=100] (\x, { 0.1 * \x * \x})
          -- (\X+\O, 0) 
          -- cycle;  
        
        \draw[line width=1.5, -latex] (-0.5,0) -- (\X+2.5,0);%
        \draw[] ({(\X+2.5)/2},-0.25) node[below] {Share Supply};
        
        \draw[line width=1.5, -latex] (0,-0.5) -- (0,3.25) node[right, xshift=0.1cm, yshift=-0.2cm] {Price};
 
        \draw[line width=1.25, latex-] ({\X+\O+0.1}, 1.0) -- ({\X+\O+0.5},1.0);
        
        \draw[] ({\X+\O+0.5}, 1.25) node[right, yshift=-0.05cm] {Blue area is};
        
        \draw[]  ({\X+\O+0.5},0.75) node[right, yshift=-0.05cm] {payment $m$};
        
        \draw[latex-, line width=1.25] ($(\X, 0.1*\X*\X) +(150:0.2)$) -- ($(\X, 0.1*\X*\X) +(150:0.75)$) node[above left, xshift=0.05cm, yshift=-0.2cm] {$(s,p(s))$};
        
        \draw[latex-, line width=1.25] ($(\X+\O, {0.1*(\X+\O)*(\X+\O)}) +(150:0.2)$) -- ($(\X+\O, {0.1*(\X+\O)*(\X+\O)}) +(150:0.75)$) node[above left, xshift=0.05cm, yshift=-0.2cm] {$(s+x,p(s+x))$};

        \draw[BLUE!80!black, dashed, line width=1.3] plot [domain=0:\X,samples=100] (\x, { 0.1 * \x * \x});      
        
        \draw[BLUE!80!black, line width=1.3] plot [domain=\X:\X+\O,samples=100] (\x, { 0.1 * \x * \x});        
        
        \draw[BLUE!80!black, line width=1.3, dashed] plot [domain=\X+\O:\X+\O+0.5,samples=100] (\x, { 0.1 * \x * \x});

        \draw[fill=black] (\X,0.1*\X*\X) circle (0.1); 
        \draw[fill=black] ({\X+\O},{0.1*(\X+\O)*(\X+\O)}) circle (0.1); 
        
        \foreach \x in {1,2,3,4}
        { 
            \draw[line width=1.5] (1.166666*\x,0.2) -- (1.16666*\x, -0.2) ;  
        }         
    \end{tikzpicture}
    
    }
    \hspace*{10pt}
    \subfloat[{\normalsize   Burn Share Process}]{
    \begin{tikzpicture}[scale=0.9]
      
        \def\X{3.5}
        \def\Y{2}
        \def\O{1.16666}

        \fill [RED, opacity=0.5,  variable=\x, samples=100]
          (\X-\O, 0) 
          -- plot [domain=\X-\O:\X,samples=100] (\x, { 0.1 * \x * \x})
          -- (\X, 0) 
          -- cycle;  
        
        \draw[line width=1.5, -latex] (-0.5,0) -- (\X+2.5,0);%
        \draw[] ({(\X+2.5)/2},-0.25) node[below] {Share Supply};
        
        \draw[line width=1.5, -latex] (0,-0.5) -- (0,3.25) node[right, xshift=0.1cm, yshift=-0.2cm] {Price};
 
        \draw[line width=1.25, latex-] ({\X+0.2}, 0.85) -- ({\X+0.75},0.85);
        
        \draw[] ({\X+0.75},1.1) node[right, yshift=-0.05cm] {Red area is};
        
        \draw[]  ({\X+0.75},0.6) node[right, yshift=-0.05cm] {reward $m$};
        
        \draw[latex-, line width=1.25] ($(\X, 0.1*\X*\X) +(90:0.2)$) -- ($(\X, 0.1*\X*\X) +(90:0.75)$) node[above left, xshift=0.5cm, yshift=0.0cm] {$(s,p(s))$};
        
        \draw[latex-, line width=1.25] ($(\X-\O, {0.1*(\X-\O)*(\X-\O)}) +(90:0.2)$) -- ($(\X-\O, {0.1*(\X-\O)*(\X-\O)}) +(90:0.6)$) node[above left, xshift=0.7cm, yshift=0.0cm] {$(s+x,p(s+x))$};

        \draw[RED!80!black, dashed, line width=1.3] plot [domain=0:\X-\O,samples=100] (\x, { 0.1 * \x * \x});      
        
        \draw[RED!80!black, line width=1.3] plot [domain=\X-\O:\X,samples=100] (\x, { 0.1 * \x * \x});        
        
        \draw[RED!80!black, line width=1.3, dashed] plot [domain=\X:\X+1.2,samples=100] (\x, { 0.1 * \x * \x});

        \draw[fill=black] (\X,0.1*\X*\X) circle (0.1); 
        \draw[fill=black] ({\X-\O},{0.1*(\X-\O)*(\X-\O)}) circle (0.1); 
        
        \foreach \x in {1,2,3,4}
        { 
            \draw[line width=1.5] (1.166666*\x,0.2) -- (1.16666*\x, -0.2) ; 
        }         
    \end{tikzpicture}
    } 
    \caption{Canonical Bancor bonding curves. Each bonding curve $p(s)$ defines price in terms of share supply $s$. (a) Providing   payment $m$   results in minting $x$ tokens, where $x$ is chosen so the area under the curve equals $m$. (b) The reverse process, rewarding $m$, is used to burn $x$ tokens   analogously.}
    \label{fig: bancor}
\end{figure}

\newpage
\section{Continuous Organizations + Bonding Curves}

This section formalizes   bonding curves ideas in COs\footnote{For a light introduction to bonding curves, see \cite{graph2022bonding}.}.
Tokens are minted by buying and burned upon selling.
We first define bonding curves, which relate price to share supply and are denoted by $p$ and $q$.

\begin{definition}[Bonding Curve]
    A bonding curve $p$ is a strictly increasing and continuous function 
    $p\colon [0,\infty)\rightarrow\bbR$
     satisfying $p(0) = 0$.
\end{definition} 

Tokens are minted and burned via an integral equation. 
The payment/reward $m$,   tokens minted/burned $x$, and   bonding curve $p$ are related by the area under the bonding curve (see areas in Figure \ref{fig: bancor}), \ie 
\begin{equation}
    m = \int_s^{s+x}p(\zeta)
    \ \mbox{d}\zeta .
    \label{eq: buy-sell-equation}
\end{equation} 
Explicit formulas   bounding $m$ in terms of $x$ and $x$ in terms of $m$ for burning and minting, respectively, are   used to maximize computation efficiency in smart-contract code (\eg see   Appendix \ref{sec: proofs}).

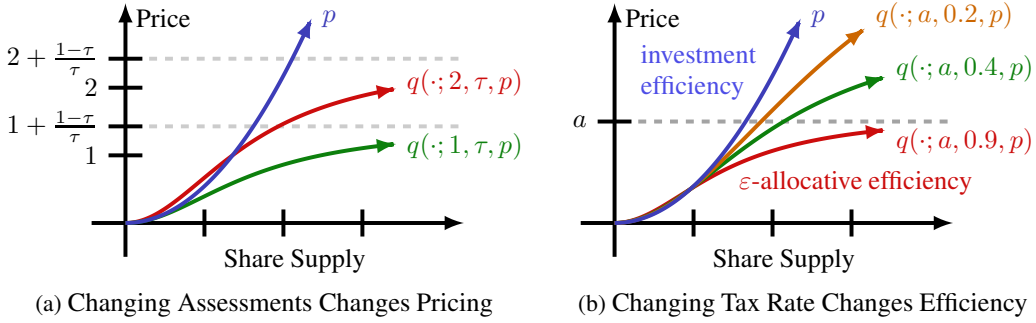
\begin{figure}[t]
    \centering
    \subfloat[{\normalsize Changing Assessments Changes Pricing}]{
    \begin{tikzpicture}[scale=0.9]
        \def\X{4}
        \def\Y{2}
        \def\O{1.16666}
        \def\T{0.7}
        \def\AA{2}
        \def\AB{1}
        \def\K{0.4}

        \foreach \yy in {\AA, \AB}
        {
         \draw[line width=1.5] (-0.25, \yy) node[left] {$\yy$} -- (0.25,\yy);
        \draw[line width=1.5] (-0.25, {\yy + (1-\T)/\T}) node[left] {$\yy+\frac{1-\tau}{\tau}$} -- (0.25,{\yy + (1-\T)/\T});
        }

        \foreach \yy in {{\AB + (1-\T)/\T}, {\AA + (1-\T)/\T}}
        {
        \draw[line width=1.5, dashed, opacity=0.2] (0.35, {\yy}) -- (\X+1,{\yy});
        }

        \draw[line width=1.5, -latex] (-0.5,0) -- (\X+1.0,0);%
        \draw[] ({(\X+1)/2},-0.25) node[below] {Share Supply};
        
        \draw[line width=1.5, -latex] (0,-0.5) -- (0,3.25) node[right, yshift=-0.2cm] {Price};

         \draw[line width=1.5, RED, -latex]  plot [domain=0:\X,samples=100] (\x, { (1-\T + \T*\AA)*\K*\x*\x / (1+\T * \K *\x * \x }) node[right, yshift=0.08cm] {${q}(\cdot; \AA,\tau,p)$};   
         
        \draw[line width=1.5, GREEN, -latex]  plot [domain=0:\X,samples=100] (\x, { (1-\T + \T*\AB)*\K*\x*\x / (1+\T * \K *\x * \x }) node[right, yshift=-0.03cm] {${q}(\cdot; \AB,\tau,p)$};    
        
        \draw[BLUE!80!black, line width=1.5, -latex] plot [domain=0:2.75,samples=100] (\x, {\K * \x * \x}) node[right] {$p$};  
        \foreach \x in {1,2,3}
        { 
            \draw[line width=1.5] (1.166666*\x,0.2) -- (1.16666*\x, -0.2);
        }         
    \end{tikzpicture}
    
    }
    \hspace*{5pt}
    \subfloat[{\normalsize   Changing Tax Rate Changes Efficiency}]{
        \begin{tikzpicture}[scale=0.9]
        
        \def\X{4.0}
        \def\Y{2}
        \def\O{1.16666}
        \def\TA{0.9}
        \def\TB{0.4}
        \def\TC{0.2}
        \def\AA{1.5} 
        \def\K{0.4}
 
        \draw[line width=1.5] (-0.25, \AA) node[left] {$a$} -- (0.25,\AA);
    
        \foreach \yy in {{\AA}, {\AA}} 
        {
        \draw[line width=1.5, dashed, opacity=0.2] (0.35, \AA) -- (\X+1,\AA);
        }

        \draw[line width=1.5, -latex] (-0.5,0) -- (\X+1.0,0);%
        \draw[] ({(\X+1)/2},-0.25) node[below] {Share Supply};
        
        \draw[line width=1.5, -latex] (0,-0.5) -- (0,3.25) node[right, yshift=-0.2cm] {Price};

         \draw[line width=1.5, RED, -latex]  plot [domain=0:\X,samples=100] (\x, { (1-\TA + \TA*\AA)*\K*\x*\x / (1+\TA * \K *\x * \x }) node[right, yshift=-0.15cm] {${q}(\cdot; a,\TA,p)$};   
         
        \draw[line width=1.5, GREEN, -latex]  plot [domain=0:\X,samples=100] (\x, { (1-\TB + \TB*\AA)*\K*\x*\x / (1+\TB * \K *\x * \x }) node[above right, yshift=-0.2cm] {${q}(\cdot; a,\TB,p)$};    
        
        \draw[line width=1.5, orange!80!black, -latex]  plot [domain=0:\X-0.3,samples=100] (\x, { (1-\TC + \TC*\AA)*\K*\x*\x / (1+\TC * \K *\x * \x }) node[above right, yshift=-0.15cm] {${q}(\cdot; a,\TC,p)$};          
        
        \draw[BLUE!80!black, line width=1.5, -latex] plot [domain=0:2.75,samples=100] (\x, {\K * \x * \x}) node[right] {$p$};  
        \foreach \x in {1,2,3}
        { 
            \draw[line width=1.5] (1.166666*\x,0.2) -- (1.16666*\x, -0.2);
        }     
        
        \draw[RED] (1.7,0.6) node[right] {$\varepsilon$-allocative efficiency};
        
        \draw[BLUE] (0.25,2.5) node[right] {investment};
        \draw[BLUE] (0.25,2.0) node[right] {efficiency}; 
    \end{tikzpicture}
    } 
    \caption{{A voting mechanism and tax rate $\tau$ enables users to augment bonding curves when buying. (a) shows a bonding curve $p$ and an allocative curve $q(s;\ a,\tau, p)$ for two different values of $a$, which are bounded above by $a+(1-\tau)/\tau$.
    Scalpers are incentivized to assess a high   $a$ (to quickly spike   price) while   investors are incentivized to assess a low   $a$ (to maximize the gap between token value and price).
    (b)  shows   increasing the tax rate $\tau$ pulls   allocative curves $q$ closer to the assessment $a$.
    Note the tax rate $\tau$ is fixed upon initialization whereas $a$ is updated during each buy transaction. 
    }}
    
    \label{fig: allocative-curves}
    \vspace*{-10pt}
\end{figure}

\paragraph{Efficiency} 
Two notions of efficiency are relevant to our discussion of COs.
Here, investment efficiency is the notion of generating as large a return as possible with a given input, relative to a time preference.
We measure   time  indirectly via the number of tokens minted. As $p$ increases more quickly wiht share supply, there is greater investment efficiency.  On the other hand, allocative efficiency occurs when everyone can buy tokens at the commonly held value  (\eg an average of assessments). 
These notions of efficiency are at odds. Investment efficiency ``increases''   as the slope of the bonding curve $p$ increases.  However, allocative efficiency requires the slope of $p$ to go to zero.  (In the next section, we balance these via a tax rate $\tau$, with low taxes giving the former and high taxes the latter.)

\begin{definition}[Investment Efficiency] 
For a  reference function $e\colon [0,\infty) \rightarrow\bbR$,
  and a share supply $S > 0$, a bonding curve $p$ is $e$-investment efficient on $[0, S]$ provided
\begin{equation}
    e(s) \leq  p(s)
    \ \ \ \mbox{for all $s\in [0,S].$}
\end{equation} 
\end{definition}

\begin{definition}[Allocative Efficiency]
    For a tolerance $\varepsilon >0$  
      an assessment $a  \geq 0$, and a share supply $S> 0$, a bonding curve $p$ is $\varepsilon$-allocatively efficient on $[0,S]$ provided
    \begin{equation}
        p(s) \leq a + \varepsilon,
        \ \ \ \mbox{for all $s \in [0,S].$}
    \end{equation}
\end{definition}

\paragraph{Self-Assessed Taxes}  
The spirit of cut and choose in the cake-cutting division game\footnote{One person cuts a cake into two pieces. The other person selects a piece; the cutter gets the remaining piece.} can be applied to   taxes. 
In the self-assessed tax methodology, the owner of an asset self-assesses its value and pays taxes based on this assessment.
The catch is the owner must be willing to sell the asset at this assessed price.  This idea has been implemented in various forms (\eg   Sun Yat-sen made such a proposal  with the state reserving the right to purchase the property at the self-assessed price \cite{niou1994analysis} and   similarly was done in New Zealand    \cite{condliffe1930new}). 
Recently, self-assessed taxes were discussed in the book \cite{posner2019radical} and \cite{weyl2016ownership},  whose ideas are credited to a proposal by Harberger \cite{harberger1965issues}. 
Self-assessed taxes can enable achieve allocative efficiency \cite{weyl2016ownership},  \ie the most efficient use of property. 
Unlike the examples above, we focus on  a fungible setting, \ie   where individual units of property are essentially interchangeable and  indistinguishable from another (\eg gold, bundles of wheat, and tokens). 
In such cases, we us an aggregate assessment $\overline{a}$ of assessments $a$ from   shareholders.

\section{Equitable Continuous Organizations} 

We aim   to improve overall equity in COs by balancing allocative and investment efficiency. To do this, we introduce \textit{Equitable Continuous Organizations} (ECOs), which are   mechanisms for minting/burning tokens that merge a voting rule and a bonding curve.
Voting rules enable individuals to self-assess  token value during minting -- the value assessment may differ from the mint price.\footnote{Recall Buffet's famous quote, attributed to Graham, ``Price is what you pay; value is what you get'' \cite{buffet2008letter}.}
Such rules can 
Countless voting rules can be used for ECOs (\eg see Algorithms \ref{alg: buy-function} and \ref{alg: sell-function}), and these will be explored by future work.
Here, we narrow   focus to   desirable features of relations between bonding curves and self-assessments. We propose relating these via the following definition.

\begin{definition}[Allocative Curve]
For a bonding curve $p$, an assessment $a \geq 0$, and a tax rate $\tau \in [0,1]$, a function $q(s, a;\ \tau,p)$ is an allocative curve for $p$ if it possesses the following properties. 
\begin{enumerate}
    \item For any share supply $S > 0$, as the tax rate $\tau$ goes to zero, the   curve $q$ uniformly approximates the bonding curve $p$ on $[0,S]$,\footnote{Although (\ref{eq: allocative-curves-prop-1}) only shows pointwise convergence, we mean  $q(\cdot, a; \ \tau,p)\rightarrow p(\cdot)$ uniformly as $\tau \rightarrow 0^+$.} \ie 
    \begin{equation}
        \lim_{\tau \rightarrow 0^+} q(s, a;\ \tau,p) = p(s),
        \ \ \ \mbox{for all $a, s \geq 0$.}
        \label{eq: allocative-curves-prop-1}
    \end{equation}

    \item As the tax rate increases to 100\%, the   curve $q$ does not exceed the assessment $a$, \ie 
    \begin{equation}
        \lim_{\tau \rightarrow 1^-} q(s, a;\ \tau,p) \leq a,
        \ \ \ \mbox{for all $a, s \geq  0$.}
        \label{eq: allocative-curves-prop-2}
    \end{equation}
    
    \item The   curve $q$ is a bounded bonding curve\footnote{Note this also means $q$ is also continuous, increasing, and $q(0,a;\ \tau,p) = 0$.}, \ie given $a > 0$, there is $B > 0$ such that
    \begin{equation}
        q(s, a;\ \tau, p) \leq B, \ \ \ \mbox{for all $s \geq 0$.}
        \label{eq: allocative-curves-prop-3}
    \end{equation}
\end{enumerate}
\end{definition}

The above properties ensure efficiency.
The first   gives investment efficiency, \ie  an allocative curve $q$ is at least as investment efficient as the bonding curve $p$ as $\tau \rightarrow 0^+.$
The second   yields allocative efficiency as the tax rate increases. Thus, $\tau$ is used to ``tax investment efficiency,'' which may improve overall equity. We formalize these claims via the lemma below (and discuss the third property after).

\begin{lemma}[Efficiency of Allocative Curves] \label{thm: efficiency-cos}
Given a tolerance $\varepsilon > 0$ and share supply $S> 0$, if $q $ is an allocative curve for a  bonding curve $p$, then there are $\tau_-,\tau_+\in (0,1)$ such that
\begin{enumerate}[label=\roman*)]
    \item 
    for all
    $\tau \in [\tau_+,1)$, 
    the   curve
    $q(s;\ a,\tau, p)$ is $\varepsilon$-allocative efficient on $[0,S]$;
        
    \item  
        for all $\tau \in (0, \tau_-]$, 
        the   curve
    $q(s;\ a,\tau, p)$ is $(p - \varepsilon)$-investment efficient on $[0,S].$
\end{enumerate}
\end{lemma}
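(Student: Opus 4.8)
The plan is to extract both conclusions directly from the three defining properties of an allocative curve; no substantial computation is needed, and the only point requiring care is that property~(\ref{eq: allocative-curves-prop-2}) is stated pointwise in $s$ whereas $\varepsilon$-allocative efficiency must hold uniformly on $[0,S]$.

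For part~(ii) I would use the (uniform) convergence in property~(\ref{eq: allocative-curves-prop-1}), as the accompanying footnote makes explicit: the curves $s\mapsto q(s,a;\ \tau,p)$ converge to $p$ uniformly on $[0,S]$ as $\tau\to 0^+$. So I can pick $\tau_-\in(0,1)$ small enough that $\sup_{s\in[0,S]}\bigl|q(s,a;\ \tau,p)-p(s)\bigr|<\varepsilon$ for every $\tau\in(0,\tau_-]$; then $p(s)-\varepsilon<q(s,a;\ \tau,p)$ on $[0,S]$, which is exactly $(p-\varepsilon)$-investment efficiency once one reads $e=p-\varepsilon$ as the reference function in the definition.

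For part~(i) the key observation is that, for fixed $a,\tau,p$, the function $q(\cdot,a;\ \tau,p)$ is itself a bonding curve by property~(\ref{eq: allocative-curves-prop-3}), hence nondecreasing in $s$, so $\sup_{s\in[0,S]}q(s,a;\ \tau,p)=q(S,a;\ \tau,p)$. This collapses the desired uniform bound on $[0,S]$ to a single-point bound at $s=S$ — which is precisely where property~(\ref{eq: allocative-curves-prop-2}) can be invoked. Since $\lim_{\tau\to 1^-}q(S,a;\ \tau,p)\le a<a+\varepsilon$, there is $\tau_+\in(0,1)$ with $q(S,a;\ \tau,p)\le a+\varepsilon$ for all $\tau\in[\tau_+,1)$; combined with monotonicity in $s$ this yields $q(s,a;\ \tau,p)\le a+\varepsilon$ for all $s\in[0,S]$, i.e. $\varepsilon$-allocative efficiency.

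I do not expect a genuine obstacle, but the thing to get right is not to over-read property~(\ref{eq: allocative-curves-prop-2}): on its own it controls $q$ only at individual supply levels, and it is the monotonicity-in-$s$ reduction that upgrades this into a statement holding on all of $[0,S]$ — one should not silently assume uniform convergence as $\tau\to 1^-$ in the way it is stipulated as $\tau\to 0^+$. It is also worth stating explicitly that $\tau_\pm$ may depend on $\varepsilon,S,a,p,q$ and that the two regimes $(0,\tau_-]$ and $[\tau_+,1)$ are handled independently, so no relation such as $\tau_-<\tau_+$ is asserted.
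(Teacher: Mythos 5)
Your proposal is correct and follows essentially the same route as the paper's proof: part (i) via monotonicity of $q$ in $s$ to reduce the bound to the single point $s=S$ where property (\ref{eq: allocative-curves-prop-2}) applies, and part (ii) via the uniform convergence $q\to p$ as $\tau\to 0^+$ stipulated in property (\ref{eq: allocative-curves-prop-1}). Your handling of the limit in part (i) (using $\lim_{\tau\to 1^-}q(S,a;\ \tau,p)\le a < a+\varepsilon$ rather than tacitly treating the limit as equal to $a$) is in fact slightly more careful than the paper's, but the argument is the same.
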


In words, each $q$ can approximate allocative efficiency to within any tolerance $\varepsilon$, given a sufficiently high tax rate.
Conversely, as the tax rate decreases, each $q$ can approximate the original bonding curve $p$ to arbitrarily well, thereby obtaining comparable investment efficiency.
Below we provide a constructive scheme  to obtain allocative curves from bonding curves.

\begin{theorem}[Allocative Curves Exist] \label{thm: allocative-curve-bound-increasing}
For a differentiable bonding curve $p$ , a tax rate $\tau \in (0,1)$, and an assessment $a > 0$, the function $q$ defined by
\begin{equation}
    q(s;\ a,\tau,p) \triangleq  \dfrac{(1-\tau + \tau \cdot a)\cdot p(s)}{1+\tau \cdot p(s)},
    \ \ \ \mbox{for all $s \geq 0$,}
    \label{eq: allocative-curve-example}
\end{equation}
forms an allocative curve for $p$.  
\end{theorem}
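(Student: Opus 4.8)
The plan is to verify directly that the explicit $q$ in (\ref{eq: allocative-curve-example}) satisfies the three defining properties of an allocative curve for $p$; no deep idea is needed beyond one structural observation. That observation is that $q$ factors through a scalar map: writing $g_{a,\tau}(t) \triangleq (1-\tau+\tau a)\,t/(1+\tau t)$ for $t\ge 0$, we have $q(s;\ a,\tau,p) = g_{a,\tau}(p(s))$. Since $p$ is a bonding curve it is continuous, strictly increasing, and satisfies $p(0)=0$, hence $p(s)\ge 0$ for all $s\ge 0$; consequently the denominator obeys $1+\tau p(s)\ge 1>0$ everywhere, so each quotient below is well-defined and controllable by elementary bounds. (Note the argument will not actually use differentiability of $p$.) I would then address the three properties in turn.

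For property 3 — that $q$ is a bounded bonding curve — first $q(0;\ a,\tau,p) = (1-\tau+\tau a)\,p(0)/(1+\tau p(0)) = 0$, and $q$ is continuous as a quotient of continuous functions with nonvanishing denominator. For strict monotonicity I would show $g_{a,\tau}$ is strictly increasing on $[0,\infty)$ and compose with the strictly increasing $p$: for $0\le t_1<t_2$ a one-line simplification gives $g_{a,\tau}(t_2)-g_{a,\tau}(t_1) = (1-\tau+\tau a)\,(t_2-t_1)\big/\big((1+\tau t_1)(1+\tau t_2)\big)$, which is positive once we confirm the scalar $1-\tau+\tau a>0$; this holds since $1-\tau+\tau a = 1-\tau(1-a)\ge 1-\tau>0$ for $a\ge 0$, $\tau\in(0,1)$. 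Boundedness then follows because $g_{a,\tau}$ is increasing with $g_{a,\tau}(t)\to (1-\tau+\tau a)/\tau$ as $t\to\infty$, so $q(s;\ a,\tau,p)\le B$ for all $s$, with $B \triangleq (1-\tau+\tau a)/\tau = a + (1-\tau)/\tau>0$ — the asymptote shown in Figure~\ref{fig: allocative-curves}.

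For property 1, fix $S>0$ and use that $p$, being continuous on the compact $[0,S]$, is bounded there, $0\le p(s)\le M$. The algebraic identity $q(s;\ a,\tau,p)-p(s) = -\tau\,p(s)\,(1+p(s)-a)/(1+\tau p(s))$ together with $1+\tau p(s)\ge 1$ yields $|q(s;\ a,\tau,p)-p(s)|\le \tau\,M\,(1+M+a)$ for all $s\in[0,S]$; the right-hand side is independent of $s$ and vanishes as $\tau\to 0^+$, giving the uniform (hence pointwise) convergence required in (\ref{eq: allocative-curves-prop-1}). For property 2, letting $\tau\to 1^-$ in (\ref{eq: allocative-curve-example}) produces the pointwise limit $a\,p(s)/(1+p(s))$, and since $p(s)/(1+p(s))\in[0,1)$ and $a>0$ this is $\le a$, which is (\ref{eq: allocative-curves-prop-2}).

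There is no real obstacle: the proof is a short chain of elementary estimates streamlined by the factorization $q=g_{a,\tau}\circ p$ and the bound $1+\tau p(s)\ge 1$. The two spots that deserve a moment's attention are (i) checking the sign $1-\tau+\tau a>0$, which is precisely what secures strict monotonicity of $q$ (and positivity of the bound $B$), and (ii) upgrading the $\tau\to 0^+$ limit in property 1 from pointwise to uniform on $[0,S]$, for which compactness of $[0,S]$ and continuity of $p$ furnish the uniform bound $M$.
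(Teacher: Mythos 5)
Your proof is correct, and it takes a somewhat different route from the paper's. The paper verifies strict monotonicity by implicitly differentiating the identity $q(s) = (1-\tau+\tau[a-q(s)])\,p(s)$ and showing $\dot q>0$, which is why the hypothesis assumes $p$ differentiable; your factorization $q=g_{a,\tau}\circ p$ with the algebraic difference quotient $g_{a,\tau}(t_2)-g_{a,\tau}(t_1)=(1-\tau+\tau a)(t_2-t_1)/[(1+\tau t_1)(1+\tau t_2)]$ reaches the same conclusion without differentiability, which is a genuine (if mild) strengthening. For property 1 both arguments start from the same identity $p-q=\tau\,p(s)(1+p(s)-a)/(1+\tau p(s))$, but the paper then bounds $|1+p(s)-a|$ by $|2-a|$, which is only valid when $p(s)\le 1$ (e.g.\ $a=0$, $p(s)=5$ gives $6\not\le 2$); your bound $|1+p(s)-a|\le 1+M+a$ with $M=\sup_{[0,S]}p$ is the correct uniform estimate, so your version actually repairs a small slip in the paper's Step 3. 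For property 2 the paper proves the stronger quantitative statement $q\le a+(1-\tau)/\tau\le a+\varepsilon$ for $\tau\ge 1/(1+\varepsilon)$ (which it reuses in Lemma~\ref{thm: efficiency-cos}), whereas you compute the pointwise limit $a\,p(s)/(1+p(s))\le a$ directly; both satisfy (\ref{eq: allocative-curves-prop-2}), though the paper's explicit threshold is the more useful form downstream. In short: same skeleton, but your monotonicity and uniform-convergence steps are more elementary and more careful, while the paper's Step 2 yields an explicit $\tau_+$ that you would need to extract separately if it were needed later.
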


A standard example is for $p$ to be linear (\ie $p(s)=ks$ for some $k> 0$). In this case, for small $s$,   $q$ in (\ref{eq: allocative-curve-example}) behaves similarly to $p$, and $q$ approaches $a + (1-\tau)/\tau$ as $s$ gets large.
In the same manner as bonding curves, we use (\ref{eq: buy-sell-equation}), replacing $p(\zeta)$ with $q(\zeta;\ a,\tau,p)$. 
For explicit formulas and   notes (\eg for Taylor approximations) for the payment/reward $m$ and tokens minted/burned $x$,  see Appendix \ref{sec: proofs}.

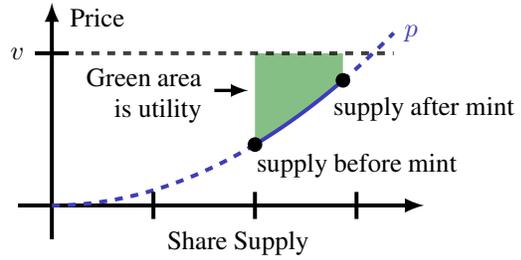
\begin{wrapfigure}[16]{r}{0.495\textwidth}

    \centering
    \begin{tikzpicture}[scale=0.9]
        \def\X{3}
        \def\Y{2}
        \def\V{2.25}
        \def\O{1.3}

        \fill [GREEN, opacity=0.5,  variable=\x, samples=100]
          (\X, \V) 
          -- plot [domain=\X:\X+\O,samples=100] (\x, { 0.1 * \x * \x})
          -- (\X+\O, \V) 
          -- cycle;  
        
        \draw[line width=1.5, -latex] (-0.5,0) -- (\X+2.5,0);%
        \draw[] ({(\X+2.5)/2},-0.25) node[below] {Share Supply};
        
        \draw[line width=1.5, -latex] (0,-0.5) -- (0,3) node[right, xshift=0.1cm, yshift=-0.2cm] {Price};
 
        \draw[line width=1.25, -latex] ({\X-0.6}, \V-0.55) -- ({\X-0.1},\V-0.55);
        
        \draw[] ({\X-0.65}, \V-0.3) node[left, yshift=-0.05cm] {Green area};
        
        \draw[]  ({\X-0.65},\V-0.8) node[left, yshift=-0.05cm] {is utility};
        
        \draw[line width=1.5] (-0.25,\V) node[left] {$v$} -- (0.25,\V);
        
        \draw[line width=1.5, opacity=0.75, dashed] (0.35,\V) -- (\X+\O+0.75,\V);

        \draw[BLUE!80!black, dashed, line width=1.5] plot [domain=0:\X,samples=100] (\x, { 0.1 * \x * \x});      
        
        \draw[BLUE!80!black, line width=1.5] plot [domain=\X:\X+\O,samples=100] (\x, { 0.1 * \x * \x});        
        
        \draw[BLUE!80!black, line width=1.5, dashed] plot [domain=\X+\O:\X+\O+0.75,samples=100] (\x, { 0.1 * \x * \x}) node[right] {$p$};

        \draw[fill=black] (\X,0.1*\X*\X) circle (0.1) node[below right, xshift=-0.1cm] {supply before mint}; 
        \draw[fill=black] ({\X+\O},{0.1*(\X+\O)*(\X+\O)}) circle (0.1)  node[below right, xshift=-0.25cm, yshift=-0.1cm] {supply after mint}; ; 
        
        \foreach \x in {1,2,3}
        { 
            \draw[line width=1.5] (1.5*\x,0.2) -- (1.5*\x, -0.2) ;  
        }         
    \end{tikzpicture}

    \vspace*{-5pt}
    \caption{For investors with   low time preferences, utility is aptly given by the difference between their valuation $v$ and  price $p$ paid (\ie area of green region). To increase this utility, assessment $a=0$ is used to ``push down price.''
    Yet, scalpers have a high time preference and  wish to pick large $a$.}
    \label{fig: investor}
\end{wrapfigure}

\begin{algorithm}[t]
\caption{ Mint Function Example for an Equitable Continuous Organization}
\label{alg: buy-function}
\begin{algorithmic}[1]           
    \STATE{\begin{tabular}{p{0.400\textwidth}r}
     \hspace*{-8pt}  Mint$(m, a;\ \overline{a}, \tau, c, \theta, r,  p):$
     & 
     $\vartriangleleft$ Input payment   $m$ and per token assessment $a$
     \end{tabular}}    \\[3pt] 
    
    \STATE{\begin{tabular}{p{0.400\textwidth}r}
      \hspace*{8pt} $\alpha  \leftarrow (1-\theta) \overline{a} + \theta a$
     & 
     $\vartriangleleft$ Update  assessment via averaging with $\theta\in(0,1)$
     \end{tabular}}     \\[3pt]    
     
      \STATE{\begin{tabular}{p{0.400\textwidth}r}
      \hspace*{8pt} $\overline{a}  \leftarrow  \max(\min(\alpha, (1+c)\overline{a}), (1-c)\overline{a})$
     & 
     $\vartriangleleft$ Bound relative assessment change by $c\in (0,1)$
     \end{tabular}}     \\[3pt]

    \STATE{\begin{tabular}{p{0.400\textwidth}r}
     \hspace*{8pt}  $s \leftarrow  s +   x$
     & 
     $\vartriangleleft$ Mint by solving (\ref{eq: buy-sell-equation}) for $x$ with $p=q(\cdot,\overline{a};\ \tau,p)$
     \end{tabular}}    \\[3pt]     
     
    \STATE{\begin{tabular}{p{0.400\textwidth}r}
     \hspace*{8pt}  $r \leftarrow r+m$
     & 
     $\vartriangleleft$ Increase reserves $r$ held in CO by $m$
     \end{tabular}}    \\[3pt]            

    \STATE{\begin{tabular}{p{0.400\textwidth}r}
     \hspace*{8pt} {\bf return} tokens $x $
     &   $\vartriangleleft$ Output tokens to ``buyer''
     \end{tabular}}   
\end{algorithmic}
\end{algorithm}

\begin{algorithm}[t]
\caption{ Burn Function Example for an Equitable Continuous Organization }
\label{alg: sell-function}
\begin{algorithmic}[1]           
    \STATE{\begin{tabular}{p{0.400\textwidth}r}
     \hspace*{-8pt}  Burn$(x;\ \overline{a},\tau , r, p):$
     & 
     $\vartriangleleft$ Input tokens $x<0$ to sell
     \end{tabular}}    \\[3pt]

    \STATE{\begin{tabular}{p{0.400\textwidth}r}
     \hspace*{4pt}  $r \leftarrow r - |m|$
     &   $\vartriangleleft$  
     Solve (\ref{eq: buy-sell-equation}) for $m<0$ with $p=q(\cdot,\overline{a};\ \tau,p)$
     \end{tabular}}     \\[3pt]       
     
    \STATE{\begin{tabular}{p{0.400\textwidth}r}
     \hspace*{4pt}  $s \leftarrow s + x$
     &   $\vartriangleleft$  Burn shares to decrease share supply\end{tabular}}   \\[3pt]       

    \STATE{\begin{tabular}{p{0.400\textwidth}r}
     \hspace*{4pt} {\bf return} reward $ |m|$
     &   $\vartriangleleft$ Output monetary reward to ``seller''
     \end{tabular}}   
\end{algorithmic}
\end{algorithm}

\paragraph{Proof-of-Vote} A voting rule property with potentially significant impact is the level of influence each mint can have on the aggregate assessment $\overline{a}$ used by the ECO. Optimal design of this remains an open question, which we highlight as follows. 
If the relative weight of an assessment  $a$ in obtaining $\overline{a}$ is proportional to the volume of tokens minted, then a few wealthy and early tokenholders may effectively control the aggregate assessment $\overline{a}$.
Alternatively, each transaction assessment $a$ could contribute equally to the aggregate $\overline{a} $  (regardless of transaction size). Informally, we refer to this as the ``proof-of-vote'' property since (relatively) fixed frictional costs\footnote{If gas prices are volatile, the influence of assessments   could be   proportional to gas prices paid.}   involved for each mint  gives a similar flavor to ``proof-of-work'' mechanisms.
In other words, the ``have nots'' can have equal contribution to defining the bonding curve as the ``haves.''
This latter option may limit the ability to make short-term gains while also stabilizing the convergence of price to an aggregate average assessment (\ie     bound the derivative of $q(s,\overline{a}; \ \tau,p)$ with respect to   $a$). 
 
 \vspace*{-5pt}

\paragraph{Limited Upsides and Front-Running} The third property of allocative curves can help align incentives. 
To maximize the value obtained from minting, investors may provide   assessments $a$ less than their valuation $v$. This is shown by Figure \ref{fig: investor}, where we note the utility for an investor that seeks to be a long-term tokenholder with low time preference is roughly their valuation $v$ minus the price paid $m$.   This can mitigate sandwich attacks since it can imply token prices will be \textit{lower} after a   purchase rather than higher (\eg see Figure \ref{fig: front-running-fail}). In such a case, a sandwich attack would  \textit{lose money}. Investors can let speculative curators  push the price up over time. Further, since the allocative curve $q$ is bounded by a function of  $\overline{a}$,   purchaser assessments $a$ may limit potential profits by low-balling token value. Depending on the minting mechanism, this may incline them to report more truthfully.

 \vspace*{-8pt}

\section{Conclusions}
We propose use of ECOs in place of COs to increase allocative efficiency and overall equity. The new allocative curves   enable desirable properties for ECOs, which may be promising both with respect to efficiency and to disincentivizing  some trading behaviors (\eg sandwich attacks) while rewarding long-term investors.
Future research will  provide explicit utility functions modeling speculators and investors to formally establish early-stage incentives, use game-theoretic analysis (\eg via Nash equilibria) to suggest various voting rules\footnote{We suspect there are more apt mint/burn schemes than those in Algorithms \ref{alg: buy-function} and \ref{alg: sell-function}.},  
and  generate realistic simulations for deeper insights.

\newpage
\begin{figure}[t]
    \centering 
        \begin{tikzpicture}[scale=0.9]
        \def\X{5.0}
        \def\Y{2}
        \def\O{1.16666}
        \def\TA{0.7}  
        \def\AA{2.5} 
        \def\AB{2.5*2.0/3.0}
        \def\K{0.4}
 
        \def\BM{{ (1-\TA + \TA*\AA)*\K*3*3 / (1+\TA * \K *3 * 3) }}
        \def\AM{{ (1-\TA + \TA*\AB)*\K*4.5*4.5 / (1+\TA * \K *4.5 *4.5) }}
 
        \draw[line width=1.5] (-0.25, \BM) node[left, yshift=0.1cm] {pre-mint price} -- (0.25,\BM); 
        \draw[line width=1.5, dashed, opacity=0.5] (0.35, \BM) -- (\X+1,\BM);
        
        \draw[line width=1.5] (-0.25, \AM) node[left, yshift=-0.1cm] {post-mint price} -- (0.25,\AM); 
        \draw[line width=1.5, dashed, opacity=0.5] (0.35, \AM) -- (\X+1,\AM);
        
        \draw[line width=1.5, dashed, opacity=0.5] (4.5, 0.35) -- (4.5,\AM);
        
        \draw[line width=1.5, dashed, opacity=0.5] (3, 0.35) -- (3,\BM);

        \draw[line width=1.5, -latex] (-0.5,0) -- (\X+1.0,0);%
        \draw[] ({(\X+1)/2},-0.75) node[below] {Share Supply};
        
        \draw[line width=1.5, -latex] (0,-0.5) -- (0,3.25) node[right, yshift=-0.2cm] {Price};

         \draw[line width=1.5, RED, -latex]  plot [domain=0:\X,samples=100] (\x, { (1-\TA + \TA*\AA)*\K*\x*\x / (1+\TA * \K *\x * \x }) node[right, yshift=0.05cm] {${q}(\cdot; a,\tau,p)$};   
         
        \draw[line width=1.5, GREEN, -latex]  plot [domain=0:\X,samples=100] (\x, { (1-\TA + \TA*\AB)*\K*\x*\x / (1+\TA * \K *\x * \x }) node[right, yshift=-0.3cm, xshift=-0.1cm] {${q}(\cdot; 2a/3,\tau,p)$};

        \foreach \x in {1,2,3}
        { 
            \draw[line width=1.5] (1.5*\x,0.2) -- (1.5*\x, -0.2);
        }    
        \draw[] (3,-0.45) node[] {$s$};
        \draw[] (4.5,-0.45) node[] {$s+x$};
        
        \draw[fill=black] (3.0, \BM) circle (0.1);
        \draw[fill=black] (4.5, \AM) circle (0.1);
        
    \end{tikzpicture}
    \caption{The voting rule can make prices \textit{lower} after minting. If $x$ tokens are purchased and the assessment provided by the new tokenholder was chosen in a manner that reduced the aggreggate assessment in the ECO from $a$ to $2a/3$, then this plot shows the new price after minting can be reduced. In such situations,  a front-runner could \textit{lose} money by  trying to mint and burn immediately before and after the $x$ tokens were minted.} 
    \label{fig: front-running-fail} 
\end{figure}
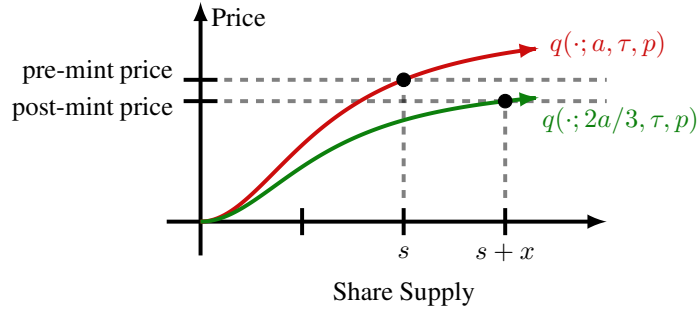

\begin{figure}[t]
    \centering
    
        \begin{tikzpicture}[scale=1.0] 
            \def\X{10.0}
     
            \draw[line width=1.5, -latex] (-0.5,0) -- (\X+1.0,0);%
            \draw[] ({(\X+1)/2},-0.75) node[below] {Time $t$: Blocks since CO Launched};
            \draw[line width=1.5, -latex] (0,-0.5) -- (0,4.0) node[right, yshift=-0.2cm] {Value of Shares Held};
            
            \draw[line width=1.5] (-0.2, 1) -- (0.2, 1);
            \draw[line width=1.5] (-0.2, 2) -- (0.2, 2);
            \draw[line width=1.5] (-0.2, 3) -- (0.2, 3);
            
            \foreach \x in {1,2,...,\X}
            {   \ifodd \x {} \else {\draw[line width=1.5] (\x,0.2) -- (\x, -0.2);} \fi
            }    
            %
            \coordinate[] (st0) at (0,1);
            \coordinate[] (st1) at (4,1);
            \coordinate[] (st2) at (4,3);
            \coordinate[] (st3) at (6,3);
            \coordinate[] (st4) at (6,0);
            
            \coordinate[] (it0) at (4,2);
            \coordinate[] (it1) at (6,2);
            \coordinate[] (it2) at (6,1);
            \coordinate[] (it3) at (10,1);
            
            \begin{scope}[very thick,decoration={
                        markings, 
                        mark=at position 0.5 with {\arrow{latex}}}
                        ]  
                        
            \foreach[evaluate=\kk as \kk using int(\k+1)] \k in {0,1,...,3}
            {   \draw[RED, line width=1.5, postaction={decorate}] (st\k) -- (st\kk);
                \ifodd \k {} \else {\fill[RED!90!black] (st\k) circle (0.12);} \fi
                \ifodd \kk {} \else {\fill[RED!90!black] (st\kk) circle (0.12);} \fi 
            }
            \foreach[evaluate=\kk as \kk using int(\k+1)] \k in {0,1,...,2}
            {   \draw[BLUE, line width=1.6, postaction={decorate}] (it\k) -- (it\kk);
                \ifodd \k {} \else {\fill[BLUE!90!black] (it\k) circle (0.12);} \fi
                \ifodd \kk {} \else {\fill[BLUE!90!black] (it\kk) circle (0.12);} \fi 
            }        
       \end{scope} 
        \draw[] (st0) node[above right, yshift=0.1cm] {{\small \textcolor{RED!80!black}{Speculator} enters at $t=0$}};
        \draw[] (st2) node[above right, yshift=0.0cm] {{\small \textcolor{BLUE!80!black}{Investor} purchase increases value of shares}};
        \draw[] (st4) node[above right, yshift=0.1cm] {{\small \textcolor{RED!80!black}{Speculator} sells shares with profit}};
        
        \draw[] (it0) node[above left, xshift=-0.1cm, yshift=0.5cm] {{\small \textcolor{BLUE!80!black}{Investor} enters after}};
        \draw[] (it0) node[above left, xshift=-0.1cm, yshift=0.0cm] {{\small due diligence}};
        \draw[] (it2) node[above right, yshift=0.1cm] {{\small \textcolor{BLUE!80!black}{Investor} now has (unrealized) loss}};
        
    \end{tikzpicture}
    \caption{The standard bonding curve design encourages speculation. In this figure, we depict profit scalping. Imagine a bot that monitors Ethereum for new bonding curves. If the bot is the first to enter a new bonding curve, it is guaranteed a profit if it sells immediately after one significant investor follows it. This leaves the investor with an unrealized loss.}
    \label{fig: speculat-game-behavior} 
\end{figure}
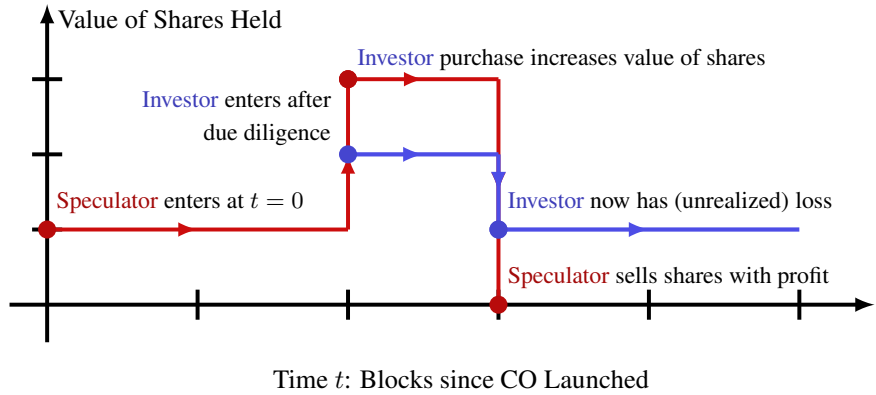

\section*{Acknowledgments}
The inspiration and many references used to develop this work came   from Brandon Ramirez at Edge \& Node, to whom we owe many thanks.
We thank Samy Wu Fung at Colorado School of Mines and Daniel McKenzie at University of California Los Angeles for valuable conversations that were constructive for this work. We thank Alexis Asseman at Semiotic AI for providing curation statistics in The Graph protocol.

\bibliographystyle{abbrv}
\bibliography{refs}
 
\newpage
\appendix

\newpage

\section{Proofs} \label{sec: proofs}

\subsection{General Allocative Curves}
{\bf Theorem \ref{thm: allocative-curve-bound-increasing}}  (Allocative Curves Exist).  \textit{For a differentiable bonding curve $p$ , a tax rate $\tau \in (0,1)$, and an assessment $a > 0$, the function $q$ defined by
\begin{equation}
    q(s;\ a,\tau,p) \triangleq  \dfrac{(1-\tau + \tau \cdot a)\cdot p(s)}{1+\tau \cdot p(s)},
    \ \ \ \mbox{for all $s \geq 0$,} 
\end{equation}
forms an allocative curve for $p$.  }

\begin{proof} We proceed in the following manner.
First we verify $q$ satisfies the third property of allocative curves (Step 1). Then we establish the second property for allocative efficiency   (Step 2). This is followed by showing $q \rightarrow p$ uniformly as $\tau \rightarrow 0^+$ (Step 3).

    {\bf Step 1.} Since $p$ is a bonding curve, $p(0) = 0$ and $p(s) > 0$ for $s > 0$. Thus, $q(0) = 0$ and $q$ is the quotient of differentiable functions, with a denominator that is bounded away from zero, and so $q$ is differentiable.  Since $q(0) = 0$ and $p(s) > p(0) = 0$ for $s > 0$ (\ie $p$ is strictly increasing),
    \begin{equation}
        q(s)
        = \dfrac{(1-\tau + \tau \cdot a)\cdot p(s)}{1+\tau \cdot p(s)}
        \leq (1-\tau + \tau \cdot a) \cdot \dfrac{1}{\tau}
        = a + \dfrac{1-\tau}{\tau},
        \ \ \ \mbox{for all $s \geq 0$.}
        \label{eq: curves-exist-proof-01}
    \end{equation}
    That is, $q $ is bounded from above by $a + (1-\tau)/\tau$.\\

    Rearranging the expression for $q$ reveals
    \begin{equation}
        q(s) = (1-\tau + \tau\cdot[a-q(s)]) \cdot p(s).
        \label{eq: curves-exist-proof-02}
    \end{equation}
    and differentiating yields (\nb we adopt the dot notation for derivatives)
    \begin{equation}
        \dot{q}(s) = (1-\tau + \tau\cdot[a-q(s)]) \cdot \dot{p}(s). - \tau \cdot p(s)\cdot \dot{q}(s),
        \ \ \ \mbox{for all $s > 0$,}
        \label{eq: curves-exist-proof-03}
    \end{equation}
    and so
    \begin{equation}
        \dot{q}(s) = \dfrac{(1-\tau + \tau\cdot[a-q(s)])\dot{p}(s)}{1+\tau \cdot p(s)},
        \ \ \ \mbox{for all $s > 0$.}
        \label{eq: curves-exist-proof-04}
    \end{equation}
    The upper bound on $q(s)$ in (\ref{eq: curves-exist-proof-01}) implies
    \begin{equation}
        1 - \tau + \tau \cdot [a-q(s)] \geq 0,
        \ \ \ \mbox{for all $s\geq 0.$}
        \label{eq: curves-exist-proof-05}
    \end{equation}
    As the denominator of $\dot{q}$ is always positive, $\dot{q}(s) > 0,$ \ie $q$ is strictly increasing. 
    Combining our results reveals $q$ is a bounded bonding curve.
    \\
    
    {\bf Step 2.}  Let $\varepsilon > 0$ be given. To verify (\ref{eq: allocative-curves-prop-2}), it suffices to show there is $\tau_+ \in (0,1)$ such that
    \begin{equation}
        q(s;\ a,\tau, p) \leq a + \varepsilon,
        \ \ \ \mbox{for all $s\geq 0$ and $\tau \in [\tau_+,1)$.}
        \label{eq: curves-exist-proof-06}
    \end{equation}
    Picking
    \begin{equation}
        \overline{\tau}_+ \triangleq \dfrac{1}{1+\varepsilon},
        \label{eq: curves-exist-proof-07}
    \end{equation}
    implies
    \begin{equation}
        1 > \tau \geq \overline{\tau}_+ = \dfrac{1}{1+\varepsilon}
        \ \ \ \implies \ \ \ 
        \tau (1+\varepsilon) \geq 1
        \ \ \ \implies \ \ \ 
        \varepsilon \geq \dfrac{1-\tau}{\tau},
        \label{eq: curves-exist-proof-08}
    \end{equation}
    and so
    \begin{equation}
        q(s;\ a,\tau, p)
        \leq a + \dfrac{1-\tau}{\tau}
        \leq a + \varepsilon,
        \ \ \ \mbox{for all $s \geq 0$ and $\tau \in [\overline{\tau}_+,1)$.}
        \label{eq: curves-exist-proof-09}
    \end{equation} 
    In particular, this shows the limit in (\ref{eq: allocative-curves-prop-2}) holds, taking $\tau_+ = \overline{\tau}_+$.  \\

    {\bf Step 3.}
    Let $\delta > 0$ be given. It suffices to show there is $\tau_- \in [0,1]$ such that
    \begin{equation}
        \|q(\cdot,a;\ \tau,p)-p(\cdot)\|_\infty
        =
        \sup_{s\in[0,S]} |q(s,a;\ \tau,p) - p(s)| \leq \delta,
        \ \ \ \mbox{for all $\tau \in [0,\tau_-].$}
        \label{eq: curves-exist-proof-10}
    \end{equation}
    Observe
    \begin{subequations}
    \begin{align}
        \left| p(s) - q(s,a;\ \tau,p)\right|
        & = \left|\dfrac{[1+\tau\cdot p(s)] \cdot p(s)}{1+\tau\cdot p(s)}
        -  \dfrac{(1-\tau + \tau \cdot a)\cdot p(s)}{1+\tau \cdot p(s)} \right|\\
        & = \tau \cdot \left| \dfrac{(1 +  p(s)-a)\cdot p(s)}{1+\tau\cdot p(s)}\right| \\
        & \leq \tau \cdot |2-a|\cdot p(s) \\
        & \leq \tau\cdot |2-a| \cdot p(S),
        \ \ \ \mbox{for all $s\in[0,S],$}
    \end{align}
    \label{eq: curves-exist-proof-11}
    \end{subequations}
    where the final inequality holds since $p$ is strictly increasing.
    Hence picking
    \begin{equation}
        \overline{\tau}_- = \dfrac{\delta}{\max(1,|2-a|\cdot p(S))}
        \label{eq: curves-exist-proof-12}
    \end{equation}
    yields
    \begin{equation}
        \left| p(s) - q(s,a;\ \tau,p)\right|
        \leq \dfrac{\delta \cdot |2-a|\cdot p(S)}{\max(1, |2-a|\cdot p(S))}
        \leq \delta,
        \ \ \ \mbox{for all $s \in [0,S]$ and $\tau \in [0,\overline{\tau}_-].$}
        \label{eq: curves-exist-proof-13}
    \end{equation}
    The bound in (\ref{eq: curves-exist-proof-13}) holds for all $s\in [0,S]$, thereby implying (\ref{eq: curves-exist-proof-10}) holds upon taking $\tau_- = \overline{\tau}_-.$
\end{proof}


\vspace*{1.in}

 {\bf Lemma \ref{thm: efficiency-cos}} (Efficiency of Allocative Curves).\textit{Given a tolerance $\varepsilon > 0$ and share supply $S> 0$, if $q $ is an allocative curve for a  bonding curve $p$, then there are $\tau_-,\tau_+\in (0,1)$ such that
\begin{enumerate}[label=\roman*)]
    \item 
    for all
    $\tau \in [\tau_+,1)$, 
    the   curve
    $q(s;\ a,\tau, p)$ is $\varepsilon$-allocative efficient on $[0,S]$; 
    \item  
        for all $\tau \in (0, \tau_-]$, 
        the   curve
    $q(s;\ a,\tau, p)$ is $(p - \varepsilon)$-investment efficient on $[0,S].$
\end{enumerate}} 
\begin{proof}
We verify allocative efficiency (Step 1) and investment efficiency (Step 2) in turn.  Let $\varepsilon > 0$, $S> 0$, and $a\geq 0$ be given along with an allocative curve $q$ for a bonding curve $p$. \\

{\bf Step 1.}
By the definition of a limit, it suffices to show there is $\tau^+ \in (0,1)$ such that
\begin{equation}
    q(s, a; \ \tau, p)
    \leq a + \varepsilon,
    \ \ \ \mbox{for all $s \in [0,S]$ and $\tau \in [\tau^+, 1).$}
    \label{eq: proof-efficiency-01}
\end{equation}
By the second property of allocative curves, there is $\overline{\tau}^+ \in (0,1)$ such that
\begin{equation}
    \left| q(S, a;\ \tau, p) - a\right| 
    \leq \varepsilon,
    \ \ \ \mbox{for all $\tau \in [\overline{\tau}^+,1).$}
    \label{eq: proof-efficiency-02}
\end{equation}
With the monotonicity of $q$ with respect to $s$, (\ref{eq: proof-efficiency-02}) implies
\begin{equation}
    q(s, a;\ \tau, p)
    \leq q(S, a;\ \tau,p)
    \leq a + \varepsilon,
    \ \ \ \mbox{for all $\tau \in [\overline{\tau}^+,1).$}
    \label{eq: proof-efficiency-03}
\end{equation}
Thus, (\ref{eq: proof-efficiency-01}) holds, taking $\tau^+=\overline{\tau}^+.$ \\[10pt]

{\bf Step 2.} In similar fashion, here it suffices to show there is $\tau^- \in (0,1)$ such that
\begin{equation}
    q(s, a; \ \tau, p)
    \geq p(s) - \varepsilon,
    \ \ \ \mbox{for all $s \in [0,S]$ and $\tau \in (0,\tau^-].$}
    \label{eq: proof-efficiency-04}
\end{equation}  
Since $q(\cdot,a;\ \tau, p)\rightarrow p$ uniformly as $\tau \rightarrow 0^+$, 
there is $\overline{\tau}^- \in (0,1)$ such that
\begin{equation}
    \|q(\cdot,a;\ \tau,p) - p(s)\|_\infty = \sup_{s\in[0,S]} \left| q(s, a;\ \tau, p) - p(s)\right| 
    \leq \varepsilon,
    \ \ \ \mbox{for all $\tau \in (0,\overline{\tau}^-].$}
    \label{eq: proof-efficiency-05}
\end{equation} 
In particular, this implies (\ref{eq: proof-efficiency-04}) holds, taking $\tau^-=\overline{\tau}^-,$ and the proof is complete.  
\end{proof}

\newpage

\subsection{Allocative Curves for Linear Bonding Curves}
 
A core requirement in the implementation of bonding curves is efficient computation of the payment/reward $m$ and the shares minted/burned $x$.
This is particularly important for smart contract implementations. The lemma below provides two bounds that may be used for estimating $m$ and $x$. 
We emphasize these bounds are given to ensure that transactions incur ``positive friction,'' \ie the errors do not make it possible to game the system by repeating a mint/burn transaction pair profitably. \\

\begin{remark}
    When shares are burned, $x$ and $m$ are \textit{negative}.
    Thus, for burning shares, we seek an underestimate of $-m$ or, equivalently, an overestimate of $m$.
\end{remark}

\begin{lemma}[Allocative Mint and Burn Formulas with Linear Bonding Curves] \label{lemma: allocative-curve-formulas}
    Let $s \geq 0$ be a share supply, $a\geq 0$ be an assessment, and $\tau\in (0,1)$ be a tax rate.
    Let $k > 0$ and $p(s) = k\cdot s$ be a bonding curve and $q$ be the allocative curve for $p$ defined by (\ref{eq: allocative-curve-example}).  
    \begin{enumerate}
        \item In a transaction where shares are burned (\ie $x \in [-s,0)$), the reward $|m|$ is satisfies
        \begin{equation}
            |m| =-m \geq \dfrac{\theta}{1+\tau\cdot k\cdot s}\cdot \max\left(0,\ \  s\cdot |x| - \dfrac{x^2}{\max(2, 1+\tau\cdot k\cdot s)}\right),
        \label{eq: allocative-linear-reward}
        \end{equation}
        and this lower bound on $|m|$ is monotonically increasing as $x$ decreases to $-s$.
        
        \item In a transaction where shares are minted (\ie $x\in (0,\infty)$), the shares minted $x$ satisfy
        \begin{equation}
            x  \geq \dfrac{1+\tau\cdot k\cdot s}{2}\left[ \sqrt{s^2 + \dfrac{4m}{(1-\tau+\tau\cdot a)\cdot k}} - s\right].
            \label{eq: allocative-linear-tokens}
        \end{equation}
    \end{enumerate}
\end{lemma}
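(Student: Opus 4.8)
The plan is to work directly from the integral relation (\ref{eq: buy-sell-equation}) with the allocative curve as integrand, $m = \int_s^{s+x} q(\zeta;\ a,\tau,p)\,\mathrm{d}\zeta$. Abbreviating $c \triangleq 1-\tau+\tau a$ and $b \triangleq \tau k$, so that $q(\zeta;\ a,\tau,p) = ck\zeta/(1+b\zeta)$, a closed form is available via $\zeta/(1+b\zeta) = 1/b - 1/\bigl(b(1+b\zeta)\bigr)$:
\begin{equation}
    m = \frac{ck}{b^2}\left[\,bx - \ln\frac{1+b(s+x)}{1+bs}\,\right].
\end{equation}
From here each part reduces to sandwiching this logarithm -- equivalently, the integrand $q$ itself -- between elementary polynomials, in whichever direction the part requires.

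For part 2 (minting, $x > 0$) I want an \emph{upper} bound on $m$ as a function of $x$, so that inverting it yields a \emph{lower} bound on $x$. Since $q$ is concave on $[0,\infty)$ (one checks $\ddot{q}(\zeta) = -2bck/(1+b\zeta)^3 < 0$ directly), it lies below its tangent line at $\zeta = s$; integrating that tangent over $[s,s+x]$ gives $m \le \tfrac{cksx}{1+bs} + \tfrac{ckx^2}{2(1+bs)^2} \le \tfrac{ck\,x\,[\,s(1+bs)+x\,]}{(1+bs)^2}$. The last expression is strictly increasing in $x > 0$, so solving the quadratic $m = ck\,x\,[s(1+bs)+x]/(1+bs)^2$ and keeping the positive root is a valid lower bound on $x$; the remaining task is the algebraic check that this positive root is exactly $\tfrac{1+bs}{2}\bigl[\sqrt{s^2 + 4m/(ck)} - s\,\bigr]$, which is (\ref{eq: allocative-linear-tokens}) once $1+bs = 1+\tau k s$ and $ck = (1-\tau+\tau a)k$ are substituted. (The same upper bound on $m$ also follows from $\ln(1+t) \ge t - t^2/2$ with $t = bx/(1+bs)$, if one prefers to avoid the geometric language.)

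For part 1 (burning, $x \in [-s,0)$) I instead need a \emph{lower} bound on $|m| = -m = \int_{s+x}^{s} q(\zeta;\ a,\tau,p)\,\mathrm{d}\zeta$. On $[s+x,s]$ one has $\zeta \le s$, hence $1+b\zeta \le 1+bs$ and $q(\zeta;\ a,\tau,p) \ge ck\zeta/(1+bs)$; integrating yields $|m| \ge \tfrac{ck}{1+bs}\cdot\tfrac{s^2-(s+x)^2}{2} = \tfrac{ck}{1+bs}\bigl(s|x| - x^2/2\bigr)$. This already matches (\ref{eq: allocative-linear-reward}) with the multiplicative constant $(1-\tau+\tau a)k$ and the crude constant $2$ in place of $\max(2,1+\tau k s)$; sharpening the denominator would require retaining the logarithmic term, setting $u \triangleq b|x|/(1+bs)$, estimating $-\ln(1-u) - u$ by $u^2/\bigl(2(1-u)\bigr)$, and splitting into the cases $1+\tau k s \le 2$ and $1+\tau k s > 2$. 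The outer $\max(0,\cdot)$ is then a harmless safeguard, since on $[-s,0)$ the bracket $s|x| - x^2/D$ with $D \ge 2$ is already nonnegative (because $|x| \le s$); and the claimed monotonicity as $x \downarrow -s$ follows by differentiating $y \mapsto sy - y^2/D$ in $y = |x|$: its derivative $s - 2y/D$ is nonnegative on $(0,s]$ because $D \ge 2$ gives $sD/2 \ge s \ge y$.

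The main obstacle is bookkeeping rather than ideas: every estimate has to fall on the correct side of $m$ (an over-estimate for the minting formula, an under-estimate for the burn reward) yet still be tight enough to collapse to the stated closed forms, and it is exactly this tension that forces the case analysis on the size of $1+\tau k s$ in part 1 and the choice of the tangent-line (second-order) bound rather than a cruder one in part 2.
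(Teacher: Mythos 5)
Your Part~2 is correct and complete, and it reaches (\ref{eq: allocative-linear-tokens}) exactly. The route differs from the paper's: the paper expands the logarithm via Taylor's remainder theorem, folds the remainder into the constant term of a quadratic in $x$, and discards it under the square root; you instead integrate the tangent line of the concave integrand (equivalently, use $\ln(1+t)\ge t-t^2/2$ for $t\ge 0$), relax $x^2/2$ to $x^2$, and invert the resulting increasing quadratic. Your version is cleaner and, frankly, more trustworthy: the paper's expansion (\ref{eq: lemma-allocative-curve-formula-proof-05}) drops the factor $\tfrac12$ on the quadratic term of $\ln(1+\xi)$, and it happens that this error still yields a valid (slightly weaker) lower bound on $x$ --- which your derivation confirms independently --- but nothing in the paper's write-up certifies that.

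Part~1 is where the genuine gap lies, and it is not one you can close. You establish only the denominator-$2$ bound $|m|\ge \tfrac{\theta}{1+\tau k s}\bigl(s|x|-x^2/2\bigr)$, and your sketched sharpening cannot produce $\max(2,1+\tau k s)$: with $u=\tau k|x|/(1+\tau k s)$, the estimate $-\ln(1-u)-u\le u^2/\bigl(2(1-u)\bigr)$ together with $1-u\ge 1/(1+\tau k s)$ gives back $\tau^2k^2x^2/\bigl(2(1+\tau k s)\bigr)$, i.e.\ denominator $2$ again after the algebra. More importantly, the sharper branch of (\ref{eq: allocative-linear-reward}) is \emph{false}. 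Take $\tau=\tfrac12$, $k=20$, $a=1$, $s=1$, $x=-1$, so $\theta=20$ and $1+\tau k s=11>2$: the actual reward is $|m|=\int_0^1 \tfrac{20\zeta}{1+10\zeta}\,\mathrm{d}\zeta = 2\bigl(1-\tfrac{\ln 11}{10}\bigr)\approx 1.520$, while the claimed lower bound is $\tfrac{20}{11}\bigl(1-\tfrac{1}{11}\bigr)=\tfrac{200}{121}\approx 1.653$. The paper's own proof of this branch fails at (\ref{eq: lemma-allocative-curve-formula-proof-06}): besides the missing $\tfrac12$, the remainder has the wrong sign --- for $x<0$ every term of the alternating series $\ln(1+\xi)=\xi-\xi^2/2+\xi^3/3-\cdots$ is negative, so truncating bounds $m$ from \emph{below}, i.e.\ bounds $|m|$ from \emph{above}, which is the useless direction here. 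Only the denominator-$2$ branch, which you prove, is correct; your treatment of the $\max(0,\cdot)$ safeguard and of the monotonicity in $|x|$ is fine and matches the paper's vertex argument.
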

\begin{proof}
    We first obtain a formula relating $m$ and $x$ via Taylor's remainder theorem (Step 1).
    This is used to establish (\ref{eq: allocative-linear-reward}) (Step 2) and then (\ref{eq: allocative-linear-tokens}) (Step 3). 

    {\bf Step 1.} First observe 
    \begin{align}
        m  = \int_s^{s+x} q(\zeta, a;\ \tau,p) \ \mbox{d}\zeta  
         = \underbrace{(1-\tau + \tau \cdot a)\cdot k}_{\triangleq \theta} \cdot \int_s^{s+x} \dfrac{  \zeta}{1+\tau\cdot k\cdot \zeta} \ \mbox{d}\zeta.
         \label{eq: lemma-allocative-curve-formula-proof-01}
    \end{align}  
    Integrating yields
    \begin{subequations}
    \begin{align}
       \int_s^{s+x} \dfrac{  \zeta}{1+\tau\cdot k\cdot \zeta} \ \mbox{d}\zeta 
       & = \dfrac{1}{\tau\cdot k}\cdot \int_s^{s+x}  1 - \dfrac{1}{1+\tau\cdot k\cdot \zeta} \ \mbox{d}\zeta \\
       & = \dfrac{1}{\tau\cdot k} \left[ \zeta - \dfrac{\ln(1+\tau\cdot k\cdot \zeta)}{\tau \cdot k}\right]_{s}^{s+x} \\
       & = \dfrac{1}{\tau\cdot k} \left[ x -\dfrac{1}{\tau \cdot k} \cdot \ln\left( \dfrac{1+\tau\cdot k\cdot (s+x)}{1+\tau\cdot k\cdot s}\right) \right] \\
       & = \dfrac{1}{\tau\cdot k} \left[ x -\dfrac{1}{\tau \cdot k}\cdot  \ln\left( 1 + \dfrac{ \tau\cdot k\cdot x}{1+\tau\cdot k\cdot s}\right) \right].
    \end{align}
    \label{eq: lemma-allocative-curve-formula-proof-02}
    \end{subequations}    
    Combining (\ref{eq: lemma-allocative-curve-formula-proof-01}) and (\ref{eq: lemma-allocative-curve-formula-proof-02}) reveals
    \begin{subequations}
    \begin{align}
        m =  \dfrac{\theta}{\tau\cdot k} \left[ x -\dfrac{1}{\tau \cdot k}\cdot  \ln\left( 1 +  \dfrac{ \tau\cdot k\cdot x}{1+\tau\cdot k\cdot s}\right) \right].
    \end{align}
     \label{eq: lemma-allocative-curve-formula-proof-03}
    \end{subequations} 
    Recall the Taylor series expansion relation
    \begin{equation}
        \ln(1+\xi) = \xi - \dfrac{\xi^2}{2} +\dfrac{\xi^3}{3} + \cdots
        = - \sum_{n=1}^\infty \dfrac{(-\xi)^n}{n},
        \ \ \ \mbox{for all $\xi \in (-1,1).$}
         \label{eq: lemma-allocative-curve-formula-proof-04}
    \end{equation} 
    By Taylor's remainder theorem,  there is $\zeta_x $ between $x$ and $0$ such that
    \begin{subequations}
        \begin{align}
        m & = \dfrac{\theta }{\tau\cdot k} \left[ x -  \dfrac{x}{1+\tau\cdot k\cdot s} + \dfrac{ \tau\cdot k\cdot  x^2}{(1+k\cdot\tau\cdot s)^2} -  \dfrac{\tau^2 \cdot k^2 \cdot \zeta_x^3}{(1+k\cdot\tau\cdot s)^3}\right] \\
        & = \theta\left[ \dfrac{  s\cdot x}{1+\tau\cdot k\cdot s} + \dfrac{    x^2}{(1+\tau\cdot k\cdot  s)^2} -  \dfrac{\tau\cdot k \cdot \zeta_x^3}{(1+k\cdot\tau\cdot s)^3}\right]
        \end{align}
         \label{eq: lemma-allocative-curve-formula-proof-05}
    \end{subequations}

    {\bf Step 2.} Using (\ref{eq: lemma-allocative-curve-formula-proof-05}), we see  
    \begin{equation}
        m \leq  \theta    \left[ \dfrac{ s\cdot x}{1+\tau\cdot k\cdot s} + \dfrac{    x^2}{(1+\tau\cdot k\cdot  s)^2}   \right],
        \ \ \ \mbox{for all $x\in[-s,0),$}
         \label{eq: lemma-allocative-curve-formula-proof-06}
    \end{equation}
    noting $-\xi_x^3 > 0$ for $x<0$.
    More simply, the integral in (\ref{eq: lemma-allocative-curve-formula-proof-01}) can also be bounded  by
    \begin{equation}
        m    
        \leq  \dfrac{\theta }{1+\tau\cdot k\cdot  s}\cdot \int_s^{s+x}  \zeta \ \mbox{d}\zeta
        =   \dfrac{\theta \cdot (2\cdot s\cdot x+x^2 )}{2\cdot (1+\tau\cdot k\cdot s)},
        \ \ \ \mbox{for all}\ x\in [-s,0),
        \label{eq: lemma-allocative-curve-formula-proof-07}
    \end{equation}
    where we note $\theta > 0$. 
    Combining (\ref{eq: lemma-allocative-curve-formula-proof-06}) and (\ref{eq: lemma-allocative-curve-formula-proof-07}) reveals, 
    \begin{equation}
        m \leq  \dfrac{\theta}{1+\tau\cdot k\cdot s}\left[ s\cdot x + \dfrac{x^2}{\max(2, 1+\tau\cdot k\cdot s)}\right],
        \ \ \ \mbox{for all $x\in[-s,0).$}
         \label{eq: lemma-allocative-curve-formula-proof-08},
    \end{equation}
    which establishes (\ref{eq: allocative-linear-reward}).
    Lastly, note the vertex of the quadratic bound occurs at
    \begin{equation}
        x = - \dfrac{s\cdot \max(2, 1+\tau\cdot k\cdot s)}{2} \leq -s,
         \label{eq: lemma-allocative-curve-formula-proof-09}
    \end{equation}
    and so the bound (\ref{eq: allocative-linear-reward}) is increasing for all  valid choices of $x$ (\ie $x \geq -s$). \\

    {\bf Step 3.} Setting
    \begin{equation}
        A = \dfrac{1}{(1+\tau\cdot k\cdot s)^2},
        \quad
        B =   \dfrac{s}{1+\tau\cdot k\cdot s},
        \quad
        C = -\dfrac{m}{\theta} - \dfrac{\tau\cdot k\cdot \xi_x^3}{(1+\tau\cdot k\cdot s)^3},
    \end{equation}
    reveals
    \begin{align}
        Ax^2 + Bx + C = 0.
    \end{align}
    Consequently, if $x > 0$, then, by the quadratic formula,
    \begin{subequations}
        \begin{align}
        x & = \dfrac{(1+\tau\cdot k\cdot s)^2}{2}\cdot \left[ \sqrt{ \dfrac{s^2}{(1+\tau\cdot k\cdot s)^2} - 4AC} - \dfrac{s}{1+\tau\cdot k\cdot s}\right] \\
        & = \dfrac{1+\tau\cdot k \cdot s}{2}\left[ \sqrt{s^2 + 4\left( \dfrac{m}{\theta} + \dfrac{\tau\cdot k \cdot \xi_x^3}{(1+\tau\cdot k\cdot s)^3}\right)} - s \right] \\
        & \geq \dfrac{1+\tau\cdot k\cdot s}{2}\left[ \sqrt{s^2 + \dfrac{4m}{\theta}} - s\right],
        \end{align}
    \end{subequations}
    where we note $m > 0$.
    This gives (\ref{eq: allocative-linear-tokens}), as desired.
\end{proof} 
\end{document}